\title{Tropical Gaussians: a brief survey}
\author{Ngoc Mai Tran}
\address{Department of Mathematics, University of Texas at Austin, Texas TX 78712, USA and Hausdorff Center for Mathematics, Bonn 53115, Germany}
\email{ntran@math.utexas.edu}
\thanks{The author would like to thank Bernd Sturmfels for raising the question that inspired this paper. The author would also like to thank Yue Ren and Martin Ulirsch, the organizers of the Tropical Panorama conference at the Max-Planck Institute for Mathematics in the Sciences, for the opportunity to speak about this work while it was still in progress.}
\date{\today}
\DeclareMathOperator{\R}{\mathbb{R}}
\renewcommand{\P}{\mathbb{P}}
\newcommand{\TP}{\mathbb{TP}}
  \theoremstyle{plain}
\newtheorem{theorem}{Theorem}[section]
\newtheorem{lemma}[theorem]{Lemma}
\newtheorem{proposition}[theorem]{Proposition}
  \theoremstyle{definition}
\DeclareMathOperator{\val}{val}
\begin{document}

\maketitle

\begin{abstract}
We review the existing analogues of the Gaussian measure in the tropical semiring and outline various research directions.
\end{abstract}

\section{Introduction}
\label{sec:intro1}

Tropical mathematics has found many applications in both pure and applied areas, as documented by a growing number of monographs on its interactions with various other areas of mathematics: algebraic geometry \cite{baker2016nonarchimedean,gross2011tropical,huh2016tropical,maclagan2015introduction}, 
discrete event systems \cite{baccelli1992synchronization, butkovivc2010max}, large deviations and calculus of variations \cite{kolokoltsov1997idempotent,puhalskii2001large}, and 
combinatorial optimization \cite{joswig2014essentials}. 
At the same time, new applications are emerging in phylogenetics \cite{lin2018tropical,yoshida2017tropical,page2020tropical}, statistics \cite{hook2017max}, economics \cite{baldwin2013tropical,crowell2016tropical,elsner2004max,gursoy2013analytic,joswig2017cayley,shiozawa2015international,tran2013pairwise,tran2015product}, game theory, and complexity theory \cite{allamigeon2018log,akian2012tropical}. There is a growing need for a systematic study of probability distributions in tropical settings. Over the classical algebra, the Gaussian measure is arguably the most important distribution to both theoretical probability and applied statistics. 
In this work, we review the existing analogues of the Gaussian measure in the tropical semiring. We focus on the three main characterizations of the classical Gaussians central to statistics: invariance under orthonormal transformations, independence and orthogonality, and stability. We show that some notions do not yield satisfactory generalizations, others yield the classical geometric or exponential distributions, while yet others yield completely different distributions. There is no single notion of a `tropical Gaussian measure' that would satisfy multiple tropical analogues of the different characterizations of the classical Gaussians. This is somewhat expected, for the interaction between geometry and algebra over the tropical semiring is rather different from that over $\R$. Different branches of tropical mathematics lead to different notions of a tropical Gaussian, and it is a worthy goal to fully explore all the options. We conclude with various research directions.  

\section{Three Characterizations of the classical Gaussian}
\label{sec:intro}

The Gaussian measure $\mathcal{N}(\mu,\Sigma)$, also called the normal distribution with mean $\mu \in \R^n$ and covariance $\Sigma \in \R^{n \times n}$ is the probability distribution with density
$$ f_{\Sigma,\mu}(x) \propto \exp(-\frac{1}{2}(x-\mu)^\top \Sigma^{-1} (x-\mu)), \quad x \in \R^n. $$
Let $\mathbf{I}$ denote the identity matrix, and $\mathbf{0} \in \R^n$ the zero vector. Measures $\mathcal{N}(\mathbf{0},\Sigma)$ are called centered Gaussians, while $\mathcal{N}(\mathbf{0},\mathbf{I})$ is the standard Gaussian.
Any Gaussian can be standardized by an affine linear transformation. 
\begin{lemma}\label{lem:standard}
Let $\Sigma = U\Lambda U^\top$ be the eigendecomposition of $\Sigma$. Then 
$X \sim \mathcal{N}(\mu,\Sigma)$ if and only if $(U\Lambda^{1/2})^{-1}(X-\mu) \sim \mathcal{N}(\mathbf{0},\mathbf{I})$.
\end{lemma}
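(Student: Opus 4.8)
The plan is to treat the statement as an instance of the change-of-variables formula for densities applied to the invertible affine map $T(x) = A^{-1}(x-\mu)$, where $A := U\Lambda^{1/2}$. First I would record the linear-algebra facts that make $A$ well behaved: since a covariance matrix admitting a density is symmetric positive definite, in the eigendecomposition $\Sigma = U\Lambda U^\top$ the matrix $U$ is orthogonal and $\Lambda$ is diagonal with strictly positive entries, so $\Lambda^{1/2}$ (positive square roots on the diagonal) is defined, $A$ is invertible, and $AA^\top = U\Lambda U^\top = \Sigma$. In particular $A^\top \Sigma^{-1} A = A^\top (AA^\top)^{-1} A = \mathbf{I}$, which is the identity that will collapse the quadratic form.

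For the forward implication, suppose $X \sim \mathcal{N}(\mu,\Sigma)$ and set $Y = T(X) = A^{-1}(X-\mu)$. By the Jacobian formula the density of $Y$ at $y$ equals $|\det A|\, f_{\Sigma,\mu}(Ay+\mu)$. Substituting $x = Ay+\mu$ into the quadratic form gives $(x-\mu)^\top \Sigma^{-1}(x-\mu) = y^\top A^\top \Sigma^{-1} A y = y^\top y$ by the identity above, so the density of $Y$ is proportional to $\exp(-\tfrac{1}{2}\,y^\top y)$; the constant of proportionality is forced by the requirement that a probability density integrate to $1$, hence $Y \sim \mathcal{N}(\mathbf{0},\mathbf{I})$. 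The converse is symmetric: if $Y \sim \mathcal{N}(\mathbf{0},\mathbf{I})$, apply the same argument to the inverse map $y \mapsto Ay+\mu$ (Jacobian $|\det A|^{-1}$) and read the computation backwards to obtain the density $f_{\Sigma,\mu}$ for $X = AY+\mu$.

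An alternative that avoids all Jacobian bookkeeping and the normalization hidden in the $\propto$ sign is to argue with characteristic functions: the characteristic function of $\mathcal{N}(\mu,\Sigma)$ is $t \mapsto \exp(i\, t^\top\mu - \tfrac{1}{2}\, t^\top \Sigma t)$, and a direct substitution shows $Y = A^{-1}(X-\mu)$ has characteristic function $t \mapsto \exp(-\tfrac{1}{2}\, t^\top t)$, the $\mu$-terms cancelling and $t^\top A^{-1}\Sigma A^{-\top} t = t^\top t$; uniqueness of characteristic functions then yields both directions simultaneously. I would likely present the density computation as the main argument and mention this route as a remark.

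There is essentially no hard step here; the only points requiring care are (i) that we are tacitly assuming $\Sigma$ nonsingular, so that both the density and $\Sigma^{-1}$ exist (the degenerate case would need the pushforward-measure formulation rather than densities), and (ii) tracking the normalizing constant correctly, which is precisely why the statement is phrased with $\propto$ and why the characteristic-function argument is the cleanest. One might also note in passing that $A = U\Lambda^{1/2}$ is one particular square root of $\Sigma$ among many (e.g.\ a Cholesky factor would do as well), and the lemma fixes this symmetric-eigenvalue choice.
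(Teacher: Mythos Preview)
Your argument is correct: the change-of-variables computation with $A = U\Lambda^{1/2}$ and the identity $A^\top\Sigma^{-1}A = \mathbf{I}$ is exactly the standard route, and the characteristic-function alternative is a clean variant. However, there is nothing to compare against, since the paper states Lemma~\ref{lem:standard} without proof, treating it as a well-known fact.
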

The standard Gaussian has two important properties. First, if $X$ is a standard Gaussian in $\R^n$, then its coordinates $X_1, \dots, X_n$ are $n$ independent and identically distributed (i.i.d) random variables. Second,
for any orthonormal matrix $A$, $AX \stackrel{d}{=} X$. These two properties completely characterize the standard Gaussian \cite[Proposition 11.2]{kallenberg2006foundations}. This result was first formalized in dimension three by Maxwell \cite{maxwell1860v} when he studied the distribution of gas particles, though the essence of his argument was made by Herschel \cite{herschel1850quetelet} ten years earlier, as pointed out in \cite[p10]{bryc2012normal}. 

\begin{theorem}[Maxwell] \label{thm:maxwell}
Let $X_1, \dots, X_n$ be i.i.d univariate random variables, where $n \geq 2$. Then the distribution of $X = (X_1, \dots, X_n)$ is spherically symmetric iff the $X_i$'s are centered Gaussians on $\R$.
\end{theorem}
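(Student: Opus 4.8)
The plan for the ``if'' direction is a one-line density computation: if each $X_i \sim \mathcal{N}(0,\sigma^2)$, the joint density of $X$ is proportional to $\exp(-\frac{1}{2\sigma^2}\sum_i x_i^2) = \exp(-\frac{1}{2\sigma^2}\|x\|^2)$, which depends on $x$ only through $\|x\|$, so $X$ is spherically symmetric. For the converse I would argue through characteristic functions rather than densities: this avoids assuming that $X$ has a density, and, more importantly, it hands us the continuity needed to solve the functional equation that will appear.

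So assume $X$ is spherically symmetric. First I would reduce to $n = 2$: the block-diagonal orthogonal matrices that rotate only the first two coordinates preserve the law of $X$, so the marginal law of $(X_1, X_2)$ is spherically symmetric in $\R^2$, and $X_1, X_2$ remain i.i.d. Let $\phi(t) = \E[e^{itX_1}]$ be the common characteristic function; it is continuous with $\phi(0) = 1$, and since $x \mapsto -x$ is orthogonal we get $X_1 \stackrel{d}{=} -X_1$, so $\phi$ is real-valued and even. By independence the characteristic function of $(X_1, X_2)$ is $(\xi_1, \xi_2) \mapsto \phi(\xi_1)\phi(\xi_2)$, and by spherical symmetry it is rotation-invariant in $\xi$, hence a function of $\xi_1^2 + \xi_2^2$ alone; setting $\xi_2 = 0$ identifies that function as $\phi(\sqrt{\cdot})$, yielding
$$\phi(s)\,\phi(t) = \phi\bigl(\sqrt{s^2+t^2}\bigr), \qquad s, t \in \R.$$

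The final step is to solve this. Putting $\rho(u) := \phi(\sqrt{u})$ for $u \ge 0$ turns the identity into the multiplicative Cauchy equation $\rho(u+v) = \rho(u)\rho(v)$ on $[0,\infty)$, with $\rho$ continuous and $\rho(0) = 1$. A short argument excludes zeros of $\rho$ (a zero at $u_0$ would force $\rho(u_0/2^k) = 0$ for all $k$, contradicting continuity at $0$), so $\rho > 0$; then $\log\rho$ is continuous and additive, hence linear, giving $\rho(u) = e^{cu}$ and $\phi(t) = e^{ct^2}$ for some real $c$. The bound $|\phi| \le 1$ forces $c \le 0$, and writing $c = -\sigma^2/2$ exhibits $\phi$ as the characteristic function of $\mathcal{N}(0,\sigma^2)$ (with $c = 0$ the degenerate point mass at the origin); uniqueness of characteristic functions then identifies the law of each $X_i$.

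I expect the one genuinely delicate point to be the resolution of the Cauchy-type equation: its general solutions are pathological, so the argument really depends on the continuity of $\phi$ --- which is precisely why routing everything through characteristic functions, rather than through densities as in Herschel's and Maxwell's original arguments, is the clean approach. The rest --- the reduction to $n = 2$, extracting realness and evenness from reflection symmetry, and ruling out zeros of $\rho$ --- is routine.
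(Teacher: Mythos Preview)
Your argument is correct and is essentially the standard characteristic-function proof of Maxwell's theorem. Note, however, that the paper does not supply its own proof of this statement: it is a survey, and Theorem~\ref{thm:maxwell} is simply quoted as a classical result with a reference to \cite[Proposition~11.2]{kallenberg2006foundations} and historical attributions to Maxwell and Herschel. So there is no proof in the paper to compare against; your write-up would serve perfectly well as a self-contained proof, and is in fact close in spirit to the argument in Kallenberg that the paper cites.
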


From a statistical perspective, Lemma \ref{lem:standard} and Theorem \ref{thm:maxwell} reduces working with data from the Gaussian measure to doing linear algebra. In particular, if data points come from a Gaussian measure, then they are the affine linear transformation of data points from a standard Gaussian, whose coordinates are always independent regardless of the orthonormal basis that it is represented in. These properties are fundamental to Principal Component Analysis, an important statistical technique whose tropical analogue is actively being studied \cite{yoshida2017tropical}.

There are numerous other characterizations of the Gaussian measure whose ingredients are only orthogonality and independence, see 
\cite[\S 1.9]{bogachev1998gaussian} and references therein. One famous example is Kac's theorem \cite{kac1939characterization}. It is a special case of the Darmois-Skitovich theorem \cite{darmois1953analyse,skitovitch1953property}, which characterizes Gaussians (not necessarily centered) in terms of independence of linear combinations. A multivariate version of this theorem is also known, see \cite{kagan1973characterization}. 

\begin{theorem}[Darmois-Skitovich]
Let $X_1, \dots, X_n$ be independent univariate random variables. Then the $X_i$'s are Gaussians if and only if there exist $\alpha,\beta \in \R^n$, $\alpha_i,\beta_i \neq 0$ for all $i = 1, \dots, n$, such that $\sum_i\alpha_iX_i$ and $\sum_i\beta_iX_i$ are independent.
\end{theorem}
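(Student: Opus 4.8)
The plan is to prove the two directions separately: the forward implication is a short second-moment computation, while the converse is the substantial part and reduces to analysing a functional equation for characteristic functions.

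For the direction ``$X_i$ Gaussian $\Rightarrow$ such $\alpha,\beta$ exist'', assume (setting aside the trivial degenerate cases) that each $X_i\sim\mathcal{N}(\mu_i,\sigma_i^2)$ with $\sigma_i^2>0$. For any $\alpha,\beta\in\R^n$ the pair $\bigl(\sum_i\alpha_iX_i,\ \sum_i\beta_iX_i\bigr)$ is a linear image of the Gaussian vector $(X_1,\dots,X_n)$, hence jointly Gaussian; and two jointly Gaussian variables are independent precisely when uncorrelated, i.e.\ when $\sum_i\alpha_i\beta_i\sigma_i^2=0$. Since $n\ge 2$ one can satisfy this with all $\alpha_i,\beta_i\neq 0$, for instance $\alpha=\mathbf{1}$, $\beta_i=1$ for $i<n$ and $\beta_n=-\sigma_n^{-2}\sum_{i<n}\sigma_i^2$.

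For the converse, write $\phi_j(u)=\E[e^{\mathrm{i}uX_j}]$. Replacing $X_j$ by $\alpha_jX_j$ only rescales the $\beta_j$ (keeping them nonzero), so assume $\alpha=\mathbf{1}$. Independence of $\sum_jX_j$ and $\sum_j\beta_jX_j$, combined with independence of the $X_j$, yields the functional equation
\[ \prod_{j=1}^n\phi_j(s+\beta_jt)=\prod_{j=1}^n\phi_j(s)\cdot\prod_{j=1}^n\phi_j(\beta_jt),\qquad s,t\in\R. \]
Near the origin each $\phi_j$ is nonzero, so passing to continuous logarithms $\psi_j=\log\phi_j$ turns this into the additive identity $\sum_j\psi_j(s+\beta_jt)=A(s)+B(t)$ locally, with $A=\sum_j\psi_j$ and $B(t)=\sum_j\psi_j(\beta_jt)$. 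Since the right-hand side has vanishing mixed second difference in $(s,t)$, so does the left, and iterating finite-difference operators in $s$ and $t$ — using that the $\beta_j$ are nonzero to peel off one summand at a time — shows that some finite-order difference of each $\psi_j$, of order depending only on $n$, vanishes identically near $0$. By the classical fact that a continuous function all of whose $m$-th differences vanish is a polynomial of degree below $m$, each $\psi_j$ coincides near $0$ with a polynomial $P_j$.

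The remaining step, which I expect to be the main obstacle, is to globalize this local information and pin down $\deg P_j$: the logarithm is only available near the origin. One first uses the functional equation to rule out zeros of the $\phi_j$ on all of $\R$ — a zero of minimal modulus, substituted into the product identity, forces a contradiction — so that $\psi_j=\log\phi_j$ is defined and continuous on $\R$, the additive identity and the finite-difference argument hold globally, and hence $\psi_j=P_j$ on $\R$. Finally $|\phi_j|=|e^{P_j}|\le 1$ is bounded, so Marcinkiewicz's theorem forces $\deg P_j\le 2$; writing $P_j(u)=\mathrm{i}a_ju-\tfrac12\sigma_j^2u^2$ with $\sigma_j^2\ge 0$ identifies $\phi_j$ as the characteristic function of $\mathcal{N}(a_j,\sigma_j^2)$, so each $X_j$ is Gaussian. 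If some of the $\beta_j$ coincide the peeling argument only shows that the sum of the corresponding $X_j$ is Gaussian, and one then concludes via Cram\'er's decomposition theorem that each of those summands is Gaussian as well.
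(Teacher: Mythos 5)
The survey states this theorem without proof, citing the original papers of Darmois and Skitovich, so there is no in-paper argument to compare against; your sketch should be judged on its own, and it is essentially the standard textbook proof (as in Kagan--Linnik--Rao): normalize $\alpha=\mathbf{1}$, derive the functional equation $\prod_j\phi_j(s+\beta_jt)=\prod_j\phi_j(s)\prod_j\phi_j(\beta_jt)$, peel with finite differences to make each $\log\phi_j$ a polynomial, invoke Marcinkiewicz to cap the degree at $2$, and use Cram\'er's decomposition theorem when several $\beta_j$ coincide. The step you flag as the main obstacle does in fact go through as you propose: if some $\phi_{j_0}$ vanished, take a zero $u_0$ of minimal modulus $r_0$ among all zeros of all the $\phi_j$ (attained, since there are finitely many $j$ and each zero set is closed and misses a neighborhood of $0$), and pick $t=\theta u_0/\beta_{j_0}$, $s=(1-\theta)u_0$ with $0<\theta<\min\bigl(1,|\beta_{j_0}|/\max_j|\beta_j|\bigr)$; then the left side vanishes while every argument $s$ and $\beta_jt$ on the right has modulus strictly below $r_0$, so the right side does not --- a contradiction, and this is exactly how classical expositions establish nonvanishing (an alternative globalization is to note that $\phi_j=e^{P_j}$ near $0$ makes $\phi_j$ analytic there, hence analytic in a strip, and extend by the identity theorem). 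Two small caveats. First, Marcinkiewicz's theorem needs $e^{P_j}$ to be a characteristic function, not merely bounded ($e^{-u^4}$ is bounded), so drop the boundedness remark and just cite the theorem. Second, the ``only if'' direction as literally stated in the survey fails in edge cases (for $n=1$, or when exactly one $X_i$ is nondegenerate, no admissible $\alpha,\beta$ exist); your standing assumption that all $\sigma_i^2>0$ and $n\ge2$ quietly sidesteps this, which is a looseness of the quoted statement rather than a flaw in your argument, but it is worth saying explicitly which degenerate cases are being excluded.
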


Another reason for the wide applicability of Gaussians in statistics is the Central Limit Theorem. An interesting historical account of its development can be found in \cite[\S 4]{kallenberg2006foundations}. From the Central Limit Theorem, one can derive yet other characterizations of the Gaussian, such as the distribution which maximizes entropy subject to a fixed variance \cite{barron1986entropy}. The appearance of the Gaussian in the Central Limit Theorem is fundamentally linked to its characterization as the unique $2$-stable distribution. This is expressed in the following theorem by P\'olya \cite{polya1923herleitung}. There are a number of variants of this theorem, see \cite{bogachev1998gaussian,bryc2012normal} and discussions therein.
\begin{theorem}[P\'olya]
Suppose $X,Y \in \R^n$ are independent random variables. Then $X,Y$ and $(X+Y)/\sqrt{2}$ have the same distribution iff this distribution is the centered Gaussian.
\end{theorem}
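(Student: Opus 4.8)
The plan is to work throughout with characteristic functions, writing $\varphi_X(t)=\E[e^{i\langle t,X\rangle}]$ for $t\in\R^n$. The ``if'' direction is immediate: if $X\sim\mathcal{N}(\mathbf{0},\Sigma)$ and $Y$ is an independent copy, then $(X+Y)/\sqrt{2}$ is again Gaussian with mean $\mathbf{0}$ and covariance $\tfrac{1}{2}(\Sigma+\Sigma)=\Sigma$, hence has the same law as $X$ and as $Y$; equivalently, at the level of characteristic functions this is the identity $\exp(-\tfrac{1}{2}t^\top\Sigma t)=\big(\exp(-\tfrac{1}{4}t^\top\Sigma t)\big)^2$, and characteristic functions determine distributions.

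For the ``only if'' direction I would first reduce to $n=1$. Fix $u\in\R^n$. Since $X$ and $Y$ are independent as random vectors, the real random variables $\langle u,X\rangle$ and $\langle u,Y\rangle$ are independent; they are identically distributed, and $\langle u,(X+Y)/\sqrt{2}\rangle=(\langle u,X\rangle+\langle u,Y\rangle)/\sqrt{2}$ has the same law as $\langle u,X\rangle$. So the one-dimensional case applies and shows $\langle u,X\rangle$ is a centered Gaussian for every $u$; in particular every coordinate $X_i$ has a finite variance, the covariance matrix $\Sigma$ of $X$ is well defined, and $\varphi_X(u)=\varphi_{\langle u,X\rangle}(1)=\exp\!\big(-\tfrac{1}{2}\Var(\langle u,X\rangle)\big)=\exp(-\tfrac{1}{2}u^\top\Sigma u)$, whence $X\sim\mathcal{N}(\mathbf{0},\Sigma)$, and likewise $Y$.

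It remains to treat $n=1$. Let $\varphi$ be the common characteristic function. Independence together with equality in distribution gives the functional equation $\varphi(t)=\varphi(t/\sqrt{2})^2$, hence $\varphi(t)=\varphi(2^{-k/2}t)^{2^k}$ for all $k\ge 1$. Using continuity and $\varphi(0)=1$, one first checks that $\varphi$ is zero-free (the relation $\varphi(\sqrt{2}\,t)=\varphi(t)^2$ propagates non-vanishing from a neighbourhood of the origin to all of $\R$); writing $\varphi=e^{\psi}$ with $\psi$ the continuous logarithm normalized by $\psi(0)=0$, the equation becomes the scaling identity $\psi(\sqrt{2}\,t)=2\psi(t)$. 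One then wants to conclude that, for a characteristic function, this forces $\psi(t)=-ct^2$ with $c\ge 0$; nondegeneracy gives $c>0$ and $\varphi(t)=e^{-ct^2}$, the centered Gaussian characteristic function. The cleanest route I know is to prove first that the stability relation forces $\E[X^2]<\infty$; once that is in hand, $\E[X]=0$ follows from $\E[X]=\E[(X_1+X_2)/\sqrt{2}]=\sqrt{2}\,\E[X]$, and applying the ordinary central limit theorem to $2^{-k/2}(X_1+\cdots+X_{2^k})$ — which is distributed as $X$ for every $k$ by iterating the stability relation on independent copies — shows that $X$ is a centered Gaussian.

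The main obstacle is exactly the finiteness of the second moment, equivalently the rigidity of the scaling identity $\psi(\sqrt{2}\,t)=2\psi(t)$: on its own this equation admits ``log-periodic'' continuous solutions of the form $\psi(t)=t^2\,g(\log_2 t^2)$ with $g$ continuous of period $1$, so one genuinely has to use that $e^{\psi}$ is positive-definite — not merely continuous — in order to exclude them. A workable way to do this is to pass to the symmetrization $\widetilde X=X-Y$, which is symmetric, has the nonnegative characteristic function $|\varphi|^2$, and satisfies the same stability relation since $\widetilde X\stackrel{d}{=}\big((X_1-Y_1)+(X_2-Y_2)\big)/\sqrt{2}$; one then controls the tails of $\widetilde X$ through the iterated relation and a L\'evy-type maximal inequality to obtain a moment bound, which transfers back to $X$. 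This $\alpha=2$ boundary estimate is the technical heart; everything else — the ``if'' direction, the reduction to one dimension, and the concluding central limit theorem step — is routine.
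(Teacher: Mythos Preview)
The paper does not prove P\'olya's theorem: it is quoted as a classical characterization with a citation to \cite{polya1923herleitung} and pointers to \cite{bogachev1998gaussian,bryc2012normal} for variants, and no argument is given. There is therefore nothing in the paper to compare your attempt against.

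On its own merits, your outline is sound and standard. The ``if'' direction and the Cram\'er--Wold reduction to $n=1$ are correct; the functional equation $\varphi(t)=\varphi(t/\sqrt{2})^2$, the zero-freeness of $\varphi$, and the scaling identity $\psi(\sqrt{2}\,t)=2\psi(t)$ are all fine, and you are right that the log-periodic solutions $t^2 g(\log_2 t^2)$ show one must use positive-definiteness, not just continuity. The concluding step --- once $\E X^2<\infty$ and $\E X=0$, iterate stability and invoke the CLT on $2^{-k/2}\sum_{i\le 2^k}X_i$ --- is a clean and correct finish.

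The one place where your write-up is still a sketch rather than a proof is precisely the step you flag: deducing $\E X^2<\infty$ from stability. Symmetrizing and using L\'evy's inequality on the partial sums does yield, from $2^{-k/2}\sum_{i\le 2^k}\widetilde X_i\stackrel{d}{=}\widetilde X$, a bound of the shape $2^k\,\P(|\widetilde X|>2^{k/2}t)\lesssim \P(|\widetilde X|>t)$, hence tails $\P(|\widetilde X|>s)=O(s^{-2})$; but that by itself only gives moments of order strictly less than $2$, not $\E\widetilde X^2<\infty$. You need one more ingredient to close the gap --- for instance a truncation/variance comparison (as in Feller's treatment of stable laws at $\alpha=2$), or the observation that a genuinely heavy-tailed symmetric law in the $\alpha=2$ domain requires a slowly varying correction to the $\sqrt{n}$ normalization, contradicting the exact identity $2^{-k/2}\sum_{i\le 2^k}\widetilde X_i\stackrel{d}{=}\widetilde X$. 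Spelling out whichever route you choose would turn the sketch into a complete proof.
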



\section{Tropical analogues of Gaussians}

\subsection{Tropicalizations of $p$-adic Gaussians}\label{sec:local.fields}
Evans \cite{evans2001local} used Kac's Theorem as the definition of Gaussians to extend them to local fields. Local fields are finite algebraic extensions of either the field of $p$-adic numbers or the field of formal Laurent series with coefficients drawn from the finite field with $p$ elements \cite{evans2001local}. In particular, local fields come with a tropical valuation $\val$, and thus one can define a tropical Gaussian to be the tropicalization of the Gaussian measure on a local field. A direct translation of \cite[Theorem 4.2]{evans2001local} shows that the tropicalization of the one-dimensional $p$-adic Gaussian is the classical geometric distribution. 

\begin{proposition}[Tropicalization of the $p$-adic Gaussian]\label{lem:p-adic}
For a prime $p \in \mathbb{N}$, let $X$ be a $\mathbb{Q}_p$-valued Gaussian with index $k \in \mathbb{Z}$. Then $\val(X)$ is a random variable supported on $\{k, k+1, k+2, \dots\}$, and it is distributed as $k + \emph{geometric}(1-p^{-1})$. That is, 
$$ \mathbb{P}(\val(X) = k+s) = p^{-s}(1-p^{-1}) \mbox{ for } s = 0, 1, 2, \dots. $$
\end{proposition}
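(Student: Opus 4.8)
The plan is to reduce the statement to a one-line pushforward computation with the additive Haar measure on $\mathbb{Q}_p$, using Evans' structural description of the $p$-adic Gaussian. The only external input I would invoke is \cite[Theorem 4.2]{evans2001local}: a $\mathbb{Q}_p$-valued Gaussian with index $k$ is the probability measure obtained by restricting the additive Haar measure of $\mathbb{Q}_p$ to the compact subgroup $p^k\mathbb{Z}_p = \{x \in \mathbb{Q}_p : \val(x) \geq k\}$ and renormalizing it to total mass $1$. Everything after this is elementary non-archimedean measure theory.

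First I would record that, since $X \in p^k\mathbb{Z}_p$ almost surely, we have $\val(X) \geq k$, so $\val(X)$ is supported on $\{k, k+1, k+2, \dots\}$ and its law is determined by the tail probabilities $\mathbb{P}(\val(X) \geq k+s)$ for $s \geq 0$. The point is that the event $\{\val(X) \geq k+s\}$ coincides exactly with $\{X \in p^{k+s}\mathbb{Z}_p\}$, a sub-ball of $p^k\mathbb{Z}_p$ which is again a subgroup.

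The key computation is that the normalized Haar measure of $p^{k+s}\mathbb{Z}_p$ inside $p^k\mathbb{Z}_p$ equals $p^{-s}$. This follows from either of two standard facts: the quotient $p^k\mathbb{Z}_p / p^{k+s}\mathbb{Z}_p \cong \mathbb{Z}/p^s\mathbb{Z}$ has $p^s$ cosets of equal Haar mass by translation invariance, so the distinguished coset $p^{k+s}\mathbb{Z}_p$ has mass $p^{-s}$; or directly from $\mu(p^j\mathbb{Z}_p) = p^{-j}$ for the Haar measure $\mu$ normalized by $\mu(\mathbb{Z}_p) = 1$, whence the renormalized measure of the sub-ball is $p^{k}\mu(p^{k+s}\mathbb{Z}_p) = p^{k}p^{-(k+s)} = p^{-s}$. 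Taking successive differences then yields
$$ \mathbb{P}(\val(X) = k+s) = \mathbb{P}(\val(X) \geq k+s) - \mathbb{P}(\val(X) \geq k+s+1) = p^{-s} - p^{-s-1} = p^{-s}(1-p^{-1}), $$
which is exactly the law of $k + \mathrm{geometric}(1-p^{-1})$.

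The only real obstacle is bookkeeping around Evans' conventions: one must confirm that "index $k$" in his parametrization corresponds to the ball $p^k\mathbb{Z}_p$ (and not, say, $p^{-k}\mathbb{Z}_p$ or a sphere $\val(x) = k$), and that the valuation normalization $\val(p) = 1$ is used consistently so that $|p^j\mathbb{Z}_p| = p^{-j}$. Once these conventions are pinned down, there is no analytic difficulty whatsoever, because over $\mathbb{Q}_p$ the relevant balls are subgroups and the Haar measure is completely explicit.
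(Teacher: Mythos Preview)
Your argument is correct and follows essentially the same route as the paper: invoke Evans' Theorem 4.2 to identify the index-$k$ Gaussian with normalized Haar measure on $p^k\mathbb{Z}_p$, then use the coset decomposition of $p^k\mathbb{Z}_p$ into $p$ translates of $p^{k+1}\mathbb{Z}_p$ (iterated) to read off the measure of each valuation level. Your write-up is in fact more explicit than the paper's, which simply says ``a direct computation yields the density of $\val(X)$'' after noting the same translate structure.
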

\begin{proof}
Recall that a non-zero rational number $r \in \mathbb{Q} \backslash \{0\}$ can be uniquely written as $r = p^s(a/b)$ where $a$ and $b$ are not divisible by $p$, in which case the valuation of $r$ is $|r| := p^{-s}$. The completion of $\mathbb{Q}$ under the metric $(x,y) \mapsto |x-y|$ is the field of $p$-adic numbers, denoted $\mathbb{Q}_p$. The tropical valuation of $r$ is $\val(r) := s$. By \cite[Theorem 4.2]{evans2001local}, the family of $\mathbb{Q}_p$-valued Gaussians is indexed by $\mathbb{Z}$. For each $k \in \mathbb{Z}$, there is a unique $\mathbb{Q}_p$-valued Gaussian supported on the ball $p^k\mathbb{Z}_p := \{x \in \mathbb{Q}_p: |x| \leq p^{-k}\}$. Furthermore, the Gaussian is the normalized Haar measure on this support. As $p^k\mathbb{Z}_p$ is made up of $p$ translated copies of $p^{k+1}\mathbb{Z}_p$, which in turn is made up of $p$ translated copies of $p^{k+2}\mathbb{Z}_p$, a direct computation yields the density of $\val(X)$. 
\end{proof}

There is a large and growing literature surrounding probability on local fields, or more generally, analysis on ultrametric spaces. They have found diverse applications, from spin glasses, protein dynamics, and genetics, to cryptography and geology; see the recent comprehensive review \cite{dragovich2017p} and references therein. The $p$-adic Gaussian was originally defined as a step towards building Brownian motions on $\mathbb{Q}_p$ \cite{evans2001local}. It would be interesting to use tools from tropical algebraic geometry to revisit and expand results involving random $p$-adic polynomials, such as the expected number of zeroes in a random $p$-adic polynomial system \cite{MR2266718}, or properties of determinants of matrices with i.i.d $p$-adic Gaussians \cite{evans2002elementary}. Previous work on random $p$-adic polynomials from a tropical perspective tends to consider systems with uniform valuations \cite{avendano2011multivariate}. Lemma \ref{lem:p-adic} hints that to connect the two literatures, the geometric distribution may be more suitable.

\subsection{Gaussians via tropical linear algebra}
Consider arithmetic done in the tropical algebra $(\overline\R,\oplus,\odot)$, where $\overline{\R}$ is $\R$ together with the additive identity. In the max-plus algebra 
$(\overline\R,\overline{\oplus},\odot)$ where $a \overline{\oplus} b = \max(a,b)$, for instance, $\overline{\R} = \R \cup \{-\infty\}$. In the min-plus algebra $(\overline\R,\underline{\oplus},\odot)$ where $a \underline{\oplus} b = \min(a,b)$, we have $\overline{\R} = \R \cup \{+\infty\}$. 
To avoid unnecessary technical details, in this section we focus on vectors taking values in $\R$ instead of $\overline{\R}$. 

Tropical linear algebra was developed by several communities with different motivations. It evolved as a linearization tool for certain problems in discrete event systems, queueing theory and combinatorial optimization; see the monographs \cite{baccelli1992synchronization,butkovivc2010max}, as well as the recent survey \cite{komenda2018max} and references therein.
A large body of work focuses on using the tropical setting to find analogous versions of classical results in linear algebra and convex geometry. Many fundamental concepts have rich tropical analogues, including the spectral theory of matrices \cite{akian2006max,baccelli1992synchronization,butkovivc2010max}, linear independence and projectors \cite{AlGK09,akian2011best,butkovivc2007generators,sergeev2009multiorder}, separation and duality theorems in convex analysis \cite{briec2008halfspaces,cohen2004duality,gaubert2011minimal,nitica2007max}, matrix identities \cite{gaubert1996burnside,hollings2012tropical,morrison2016tropical,simon1994semigroups}, matrix rank \cite{chan20114,develin2005rank,izhakian2009tropical,shitov2011example}, and tensors \cite{butkovic2018tropical,tsukerman2015tropical}. 
Another research direction focuses on the combinatorics of
objects arising in tropical convex geometry, such as polyhedra and hyperplane arrangements \cite{akian2012tropical,develin2004tropical,joswig2016weighted,joswig2007affine,sturmfels2012combinatorial,tran2017enumerating}. 
These works have close connections to matroid theory and are at the interface of tropical linear algebra and tropical algebraic geometry \cite{ardila2009tropical,fink2015stiefel,giansiracusa2017grassmann,hampe2015tropical,loho2018matching}.  

Despite the rich theory of tropical linear algebra, in this section we shall show that there is currently no satisfactory way to define the tropical Gaussian as a classical probability measure based on the characterizations of Gaussians via orthogonality and independence as in Section \ref{sec:intro}. This is somewhat surprising, for there are good analogues of norms and orthogonal decomposition in the tropical algebra. In hindsight, the main difficulty stems from the fact that such tropical analogues are compatible with tropical arithmetic, while classical measure theory was developed with the usual algebra. In Section \ref{sec:k} we consider the idempotent probability measure theory, where there is a well-defined Gaussian measure complete with a quadratic density function analogous to the classical case.

The natural definition for tropical linear combinations of $v_1,\dots,v_m \in \R^n$ is the set of vectors of the form
\begin{equation}\label{[v]}
[v_1,\dots,v_m] := \{a_1 \odot v_1 \oplus \dots \oplus a_m \odot v_m \mbox{ for } a_1, \dots, a_m \in \R\},
\end{equation}
where scalar-vector multiplication is defined pointwise. That is, for $a \in \R$ and $v \in \R^n$, $a \odot v \in \R^n$ is the vector with entries
$$ (a \odot v)_i = a + v_i \mbox{ for } i = 1,\dots,n.  $$
We shall also write $a + v$ for $a \odot v$, with the convention scalar-vector addition is defined pointwise.

For finite $m$, $V := [v_1,\dots,v_m]$ is always a compact set in $\mathbb{TP}^{n-1} := \R^n / \R \mathbf{1}$ \cite{develin2004tropical}. Unfortunately, this means one cannot hope to have finitely many vectors to `tropically span' $\R^m$. Nonetheless, there is a well-defined analogue orthogonal projection in the tropical algebra.  Associated to a tropical polytope $V := [v_1,\dots,v_m]$ defined by \eqref{[v]} is the canonical projector $P_V: \R^n \to V$ that plays the role of the orthogonal projection onto $V$ \cite{cohen2004duality}. This projection is compatible with the projective Hilbert metric $d_H$ \cite{cohen2000hahn,cohen2004duality}, in the sense that $P_V(x)$ is a best-approximation under the projective Hilbert metric of $x$ by points in $V$ \cite{cohen2004duality,akian2011best}. When $V$ is a polytrope, that is, a tropical polytope that is also classically convex, then $P_V$ can be written as a tropical matrix-vector multiplication. This is analogous to classical linear algebra, where best-approximations in the Euclidean distance can be written as a matrix-vector multiplication. 

In the max-plus algebra, the projective Hilbert metric is defined by
$$ d_H(x,y) = \max_{i,j\in[n]}(x_i-y_i+y_j-x_j). $$
It induces the Hilbert projective norm $\|\cdot\|_H: \R^m \to \R$ via $\|x\|_H = d_H(x,0).$ Since
$d_H(x,y) = \max_i(x_i-y_i) - \min_j(x_j-y_j)$, 
one finds that
$$ \|x\|_H = \|x - \min_i x_i\|_\infty. $$
This formulation shows that the projective Hilbert norm plays the role of the $\ell_\infty$-norm on~$\TP^{n-1}$. The appearance of $\ell_\infty$, instead of $\ell_2$, agrees with the conventional `wisdom' that generally in the tropical algebra, $\ell_2$ is replaced by $\ell_\infty$ \cite{evans2001local}. 

To generalize Maxwell's characterization of the classical Gaussians, we need a concept of orthogonality. One could attempt to mimic orthogonality via the orthogonal decomposition theorem, as done in \cite{evans2001local} for the case of local fields discussed in Section \ref{sec:local.fields}. Namely, over a normed space $(\mathcal{Y},\|\cdot\|)$ over some field $K$, say that $y_1, \dots, y_m \in \mathcal{Y}$ are orthogonal if and only if for all $\alpha_i \in K$, the norm of the vector $\sum_i\alpha_iy_i$ equals the norm of the vector $(|\alpha_1|\|y_1\|,\dots,|\alpha_m|\|y_m\|)$, that is,
\begin{equation}\label{eqn:orthogonal}
\| \sum_i\alpha_iy_i \| = \|(|\alpha_1|\|y_1\|,\dots,|\alpha_m|\|y_m\|)\|.
\end{equation}
 In the Euclidean case, this is the Pythagorean identity
$$ \|\sum_i\alpha_iy_i\|_2 = (\sum_i|\alpha_i|^2\|y_i\|^2)^{1/2}, $$
for example. The $\ell_\infty$-norm, unfortunately, does not work well with the usual notion of independence in probability. In the Hilbert projective norm, (\ref{eqn:orthogonal}) can be interpreted either as
\begin{equation}\label{eqn:first}
\|\max_i(\alpha_i + y_i)\|_H = \max_i\|\alpha_i+y_i\|_H - \min_i\|\alpha_i+y_i\|_H  = \max_i\|y_i\|_H - \min_i\|y_i\|_H
\end{equation}
or
\begin{equation}\label{eqn:second}
\|\max_i(\alpha_i + y_i)\|_H = \max_i(\alpha_i + \|y_i\|_H) - \min_i (\alpha_i + \|y_i\|_H).
\end{equation}
Unfortunately, neither formulation give a satisfactory notion of orthogonality. In \eqref{eqn:first}, as the norm is projective, the coefficients $\alpha_i$ have disappeared from the RHS. This does not support the notion that over an orthogonal set of vectors in the classical sense, computing the norm of linear combinations is the same as computing norm of the vector of coefficients. In \eqref{eqn:second}, for sufficiently large $\alpha_1$, the RHS increases without bound whereas the LHS is bounded, and thus equality cannot hold for all $\alpha_i \in \R$ over \emph{any} generating set of $y_i$'s. 

The Darmois-Skitovich characterization for Gaussians also does not generalize well. Note that the additive identity in $(\overline{\R},\oplus,\odot)$ is either $-\infty$ or $+\infty$, so the condition that $\alpha_i,\beta_i \neq 0$ becomes redundant. The following lemma states that the any compact distribution will satisfy the Darmois-Skitovich condition.
\begin{lemma}
Let $X_1,\dots,X_n$ be independent random variables on $\R^n$. Then there exist $\alpha,\beta \in \R^n$ such that $\displaystyle\bigoplus_{i=1}^n\alpha_i\odot X_i$ and $\displaystyle\bigoplus_{i=1}^n\beta_i\odot X_i$ are independent if and only if $X_1, \dots, X_n$ have compact support.
\end{lemma}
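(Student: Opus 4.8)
Throughout I take $n\ge 2$ and work in the max‑plus convention $\bigoplus=\max$; the min‑plus case follows by replacing each $X_i$ with $-X_i$, an operation that preserves both independence and compactness of supports. (For $n=1$ the two tropical combinations are $\alpha_1\odot X_1$ and $\beta_1\odot X_1$, which are independent only when $X_1$ is a.s.\ constant, so a hypothesis like $n\ge 2$ is needed, exactly as in Theorem~\ref{thm:maxwell}.)

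For the ``if'' direction, suppose $X_i$ is supported in a compact interval $[m_i,M_i]\subseteq\R$. Choose $c$ with $c+m_1>\max_{j\ge 2}M_j$ and set $\alpha=(c,0,\dots,0)$; then $\alpha_1\odot X_1=c+X_1$ almost surely dominates every other $\alpha_j\odot X_j$, so $\bigoplus_i\alpha_i\odot X_i=c+X_1$. Choosing $\beta$ symmetrically so that $\bigoplus_i\beta_i\odot X_i=c'+X_2$, the two combinations are functions of $X_1$ and of $X_2$ respectively, hence independent.

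For the ``only if'' direction, fix $\alpha,\beta$ with $U:=\bigoplus_i\alpha_i\odot X_i$ and $V:=\bigoplus_i\beta_i\odot X_i$ independent, and put $m_i=\operatorname{ess\,inf}X_i\in[-\infty,\infty)$, $M_i=\operatorname{ess\,sup}X_i\in(-\infty,\infty]$. The computational heart is that, because the $X_i$ are independent,
\[
\P(U\le s)=\prod_{i=1}^n\P(X_i\le s-\alpha_i),\qquad \P(U\le s,\,V\le t)=\prod_{i=1}^n\min\bigl(\P(X_i\le s-\alpha_i),\,\P(X_i\le t-\beta_i)\bigr),
\]
and likewise for $V$; in particular $\operatorname{ess\,inf}U=\max_i(\alpha_i+m_i)$, which is finite. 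Comparing $\P(U\le s,V\le t)$ with $\P(U\le s)\,\P(V\le t)$ and using the elementary fact that, for $a_i,b_i\in[0,1]$, one has $\prod_i\min(a_i,b_i)=\prod_i a_i\cdot\prod_i b_i$ iff $\prod_i a_i=0$, or $\prod_i b_i=0$, or for every $i$ one of $a_i,b_i$ lies in $\{0,1\}$, I expect to arrive at the clean criterion
\[
U\perp V\quad\Longleftrightarrow\quad\text{for every }i:\ \ \alpha_i+M_i\le\operatorname{ess\,inf}U\ \text{ or }\ \beta_i+M_i\le\operatorname{ess\,inf}V .
\]
Since $\operatorname{ess\,inf}U$ and $\operatorname{ess\,inf}V$ are finite, this immediately forces $M_i<\infty$ for all $i$; that is, every $X_i$ is bounded above.

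The hard part will be upgrading this to $m_i>-\infty$ for every $i$. The obstruction is structural: the maximum pins down the upper tails of $U,V$ but not the lower ones, and a coordinate that is only bounded above can, after a sufficiently negative shift, be pushed below $\operatorname{ess\,inf}U$ and $\operatorname{ess\,inf}V$ and so drop out of both combinations. My plan is to delete all such dominated coordinates, so that $U$ and $V$ become genuine coordinatewise maxima over index sets $A$ and $B$ of coordinates on which they truly depend, use an FKG/comonotonicity argument (conditioning on the remaining coordinates, two non‑constant monotone functions of a common coordinate have strictly positive covariance) to rule out $A\cap B\neq\emptyset$, and then show $A\cup B=\{1,\dots,n\}$. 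I expect this final step to be where the real work lies and where the argument is most fragile — it is the one place the stated conclusion could fail — so it may require the extra hypothesis that the $X_i$ are non‑degenerate (and, as the $n=1$ case shows, that $n$ is large enough).
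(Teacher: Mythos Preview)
Your computational core---writing the joint distribution of $(U,V)$ as a product of coordinatewise minima of CDFs and comparing it with the product of the marginals---is exactly the route the paper takes (there in min-plus, sketched only for $n=2$), and your ``if'' direction is the same dominance trick. Where you go beyond the paper is in separating the upper-tail conclusion $M_i<\infty$ from the lower-tail conclusion $m_i>-\infty$ and flagging the latter as the fragile step. The paper does not make this separation: after deriving the min/max dichotomy it simply asserts ``we conclude that $X$ and $Y$ must have compact supports'' without further argument.

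Your suspicion is warranted, and in fact the lower-tail step cannot be completed: the lemma is false as stated. In max-plus with $n\ge 3$, let $X_1,X_2$ be non-degenerate with compact support and let $X_3$ be bounded above but unbounded below. Choosing $\alpha_3,\beta_3$ sufficiently negative forces $\alpha_3+X_3$ and $\beta_3+X_3$ below the essential infima of $U$ and $V$ almost surely, so $X_3$ never enters either maximum; the remaining coefficients can then be chosen as in your ``if'' argument to make $U$ a function of $X_1$ alone and $V$ a function of $X_2$ alone. In your notation this is precisely the failure of $A\cup B=\{1,\dots,n\}$. Even for $n=2$ the conclusion breaks when one variable is degenerate: with $X_2\equiv c$ and $X_1$ bounded above but not below, take $\alpha_2$ large enough that $U\equiv\alpha_2+c$ is constant, hence independent of any $V$. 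So the additional hypotheses you tentatively propose (non-degeneracy, and effectively a restriction to $n=2$, or else a quantifier change to ``for all $\alpha,\beta$'') are not cosmetic but necessary; the paper's sketch glosses over exactly the point you isolate.
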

\begin{proof}
Let us sketch the proof for $n = 2$ under the min-plus algebra. Let $X = (X_1,X_2) \in \R^2$ and $Y = (Y_1,Y_2) \in \R^2$ be two independent variables. Define $\overline{F}_X, \overline{F}_Y: \R^2 \to [0,1]$ via $\overline{F}_X(t) = \P(X \geq t)$ and $\overline{F}_Y(t) = \P(Y \geq t).$
Fix $\alpha,\beta \in \R^2$. For $t \in \R^2$, 
\begin{align*}
\P(\alpha_1\odot X \oplus \alpha_2 \odot Y \geq t) &= \P(\min(\alpha_1+X,\alpha_2+Y) \geq t) \mbox{ by definition } \\
&= \P(X \geq t-\alpha_1)\P(Y \geq t-\alpha_2) \mbox{ by independence} \\
&= \overline{F}_X(t-\alpha_1)\overline{F}_Y(t-\alpha_2).
\end{align*}
Meanwhile, 
\begin{align*}
& \P(\alpha_1\odot X \oplus \alpha_2 \odot Y \geq t, \beta_1\odot X \oplus \beta_2 \odot Y \geq t) \\ 
=&\, \P(\min(\alpha_1+X,\alpha_2+Y) \geq t, \min(\beta_1+X,\beta_2+Y) \geq t) \mbox{ by definition} \\
=&\, \P(X \geq t-\alpha_1, X \geq t-\beta_1)\P(Y \geq t-\alpha_2, Y \geq t-\beta_2) \mbox{ by independence} \\
=& \, \min(\overline{F}_X(t-\alpha_1), \overline{F}_X(t-\beta_1)) \min(\overline{F}_Y(t-\alpha_2), \overline{F}_Y(t-\beta_2)).
\end{align*}
Therefore, for $\alpha_1\odot X \oplus \alpha_2 \odot Y$ and $\beta_1\odot X \oplus \beta_2 \odot Y$ to be independent, for all $t \in \R^2$, we need
\begin{align*} 
&\overline{F}_X(t-\alpha_1)\overline{F}_X(t-\beta_1)\overline{F}_Y(t-\alpha_2)\overline{F}_Y(t-\beta_2) \\
=& \min(\overline{F}_X(t-\alpha_1), \overline{F}_X(t-\beta_1)) \min(\overline{F}_Y(t-\alpha_2), \overline{F}_Y(t-\beta_2))\cdot \\
&\max(\overline{F}_X(t-\alpha_1), \overline{F}_X(t-\beta_1)) \max(\overline{F}_Y(t-\alpha_2), \overline{F}_Y(t-\beta_2))\\
=& \min(\overline{F}_X(t-\alpha_1), \overline{F}_X(t-\beta_1)) \min(\overline{F}_Y(t-\alpha_2), \overline{F}_Y(t-\beta_2)). 
\end{align*}
But $\overline{F}_X$ and $\overline{F}_Y$ are non-increasing functions taking values between $0$ and $1$. So 
$$\overline{F}_X(t-\alpha_1)\overline{F}_X(t-\beta_1)\overline{F}_Y(t-\alpha_2)\overline{F}_Y(t-\beta_2) \leq \min(\overline{F}_X(t-\alpha_1), \overline{F}_X(t-\beta_1)) \min(\overline{F}_Y(t-\alpha_2), \overline{F}_Y(t-\beta_2)),$$ 
and equality holds if and only if 
$$ \min(\overline{F}_X(t-\alpha_1), \overline{F}_X(t-\beta_1)) = 0, \mbox{ or } \min(\overline{F}_Y(t-\alpha_2), \overline{F}_Y(t-\beta_2)) = 0. $$
As either of these scenarios must hold for each $t \in \R^2$, we conclude that $X$ and $Y$ must have compact supports. 
Conversely, suppose that $X$ and $Y$ have compact supports. 
Then one can choose $\alpha_1 = \beta_2 = 0$ and $\alpha_2 = \beta_1$ be a sufficiently large number, so that 
$$ \alpha_1\odot X \oplus \alpha_2 \odot Y = X, \mbox{ and } \beta_1\odot X \oplus \beta_2 \odot Y = Y. $$
In this case, the Darmois-Skitovich condition holds trivially, as desired.
\end{proof}

Now consider Polya's condition. Here the Gaussian is characterized via stability under addition. When addition is replaced by minimum, it is well-known that this leads to the classical exponential distribution. One such characterization, which generalizes to distributions on arbitrary lattices, is the following \cite[Theorem 3.4.1]{bryc2012normal}.

\begin{theorem}
Suppose $X,Y$ are independent and identically distributed nonnegative random variables. Then this distribution is the exponential if and only if for all $a,b > 0$ such that $a+b=1$, $\min(X/a,Y/b)$ has the same distribution as $X$.
\end{theorem}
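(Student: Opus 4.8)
The plan is to pass to survival functions and reduce the stability condition to the classical lack‑of‑memory functional equation. Write $\overline{F}(t)=\P(X\ge t)$ for the common survival function of $X$ and $Y$. Since $X/a$ and $Y/b$ are independent,
\[
\P\bigl(\min(X/a,Y/b)\ge t\bigr)=\P(X\ge at)\,\P(Y\ge bt)=\overline{F}(at)\,\overline{F}(bt)
\]
for all $t\ge 0$ and all $a,b>0$. Because a probability distribution on $\R$ is determined by its survival function, the condition ``$\min(X/a,Y/b)\stackrel{d}{=}X$ for all $a,b>0$ with $a+b=1$'' is equivalent to the functional equation $\overline{F}(at)\,\overline{F}(bt)=\overline{F}(t)$ on that range of parameters. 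The ``only if'' direction is then immediate: for $X\sim\mathrm{Exp}(\lambda)$ we have $\overline{F}(t)=e^{-\lambda t}$, so $\overline{F}(at)\overline{F}(bt)=e^{-\lambda(a+b)t}=e^{-\lambda t}=\overline{F}(t)$.

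For the converse, fix $t>0$ and let $a$ range over $(0,1)$; then $s:=at$ ranges over $(0,t)$, while $bt=(1-a)t=t-s$, so the functional equation becomes $\overline{F}(s)\,\overline{F}(t-s)=\overline{F}(t)$ for all $0<s<t$. Re‑indexing by $u=t-s$ gives $\overline{F}(s)\,\overline{F}(u)=\overline{F}(s+u)$ for all $s,u>0$ --- exactly the lack‑of‑memory identity $\P(X\ge s+u)=\P(X\ge s)\,\P(X\ge u)$. Provided $\overline{F}$ never vanishes (see below), $g:=-\log\overline{F}$ is finite and satisfies Cauchy's additive equation $g(s)+g(u)=g(s+u)$ on $(0,\infty)$; since $g$ is automatically nonnegative and nondecreasing (because $\overline{F}$ is nonincreasing), it must be linear, $g(t)=\lambda t$ for some $\lambda\ge 0$, hence $\overline{F}(t)=e^{-\lambda t}$ and the distribution is $\mathrm{Exp}(\lambda)$.

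The genuinely fiddly part --- and the main obstacle --- is not the functional‑equation manipulation but the handling of degenerate and boundary cases needed to license the Cauchy step. First one must rule out $\overline{F}$ vanishing at a finite point, so that $g$ is real‑valued: if $\overline{F}(t^*)=0$ for some finite $t^*>0$, choose $t$ with $t^*<t<2t^*$ and apply the identity with $a=b=1/2$ to get $\overline{F}(t/2)^2=\overline{F}(t)=0$, contradicting $\overline{F}(t/2)>0$; running the same argument down to $t^*=0$ shows the only remaining way to satisfy the hypothesis is the point mass at $0$, which one should either exclude by assuming the distribution nondegenerate or regard as the limiting ``$\lambda=\infty$'' exponential. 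Likewise $\lambda=0$ is excluded since it forces $\overline{F}\equiv 1$, i.e.\ $X=+\infty$ almost surely, which is not a genuine random variable. It is worth remarking that no continuity assumption is required: monotonicity of $\overline{F}$ already excludes the pathological non‑measurable solutions of Cauchy's equation, so the conclusion $g(t)=\lambda t$ is forced. One could alternatively bootstrap the two‑variable hypothesis to $n$ i.i.d.\ copies and deduce $\overline{F}(t)^n=\overline{F}(nt)$, but the reparametrization above already yields the full functional equation in one step, so this detour is unnecessary.
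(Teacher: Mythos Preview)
The paper does not supply its own proof of this theorem; it is quoted verbatim as \cite[Theorem 3.4.1]{bryc2012normal} and used only to motivate the remark that, under P\'olya-type stability with $\min$ replacing $+$, the exponential plays the role of the Gaussian. So there is nothing in the paper to compare your argument against.

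On its own merits your proof is correct and is essentially the standard one. The reduction via survival functions and independence to $\overline{F}(at)\overline{F}(bt)=\overline{F}(t)$ is immediate; the reparametrization $s=at$, $u=bt$ with $a+b=1$ gives the full multiplicative Cauchy equation $\overline{F}(s)\overline{F}(u)=\overline{F}(s+u)$ on $(0,\infty)^2$, and monotonicity of $\overline{F}$ (hence of $g=-\log\overline{F}$) is indeed enough to force the linear solution without any continuity hypothesis. Your exclusion of a finite zero of $\overline{F}$ via the $a=b=1/2$ instance is fine; to make it airtight one should phrase it with $t_0:=\inf\{t>0:\overline{F}(t)=0\}$ and note that if $0<t_0<\infty$ then any $t\in(t_0,2t_0)$ yields the contradiction $\overline{F}(t/2)^2=\overline{F}(t)=0$ with $\overline{F}(t/2)>0$, while $t_0=0$ collapses to the point mass at $0$. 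The remaining boundary case $\lambda=0$ is correctly discarded. Your closing remark that bootstrapping to $n$ copies is unnecessary is also accurate: the two-variable hypothesis with \emph{all} convex weights already delivers the full functional equation.
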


By considering $\log(X)$ and $\log(Y)$, one could restate this theorem in terms of the min-plus algebra, though the condition $a+b=1$ does not have an obvious tropical interpretation. This shows that the tropical analogue of Gaussian is the classical exponential distribution. 

\subsection{Gaussians in idempotent probability}\label{sec:k}
Idempotent probability is a branch of idempotent analysis, which is functional analysis over idempotent semirings \cite{kolokoltsov1997idempotent}. Idempotent semirings are characterized by the additive operation being idempotent, that is, $a \oplus a = a$. The tropical semirings used in the previous sections are idempotent, but there are others, such as the Boolean semiring in semigroup theory. Idempotent analysis was developed by Litvinov, Maslov and Shipz
\cite{litvinov1998linear} in relation to problems of calculus of variations. Closely related are the work on large deviations \cite{puhalskii2001large}, which has found applications in queueing theory, as well as fuzzy measure theory and logic \cite{dubois2012fundamentals,wang2013fuzzy}. 
The work we discussed in this section is based on
that of Akian, Quadrat and Viot and co-authors  \cite{akian2011best,akian1994bellman}, whose goal was to develop idempotent probability as a theory completely in parallel to classical probability. Following their convention, we work over the min-plus algebra. 

All fundamental concepts of probability have an idempotent analogue, see \cite{akian1994bellman} and references therein. For a flavor of this theory, we compare the concept of a measure. In classical settings, a probability measure $\mu$ is a map from the $\sigma$-algebra on a space $\Omega$ to $\R_{\geq 0}$ that satisfies three properties: (i) $\mu(\emptyset) = 0$, (ii) $\mu(\Omega) = 1$, and (iii) for a countable sequence $(E_i)$ of pairwise disjoint sets,
$$ \mu(\bigcup_{i=1}^\infty E_i) = \sum_{i=1}^\infty\mu(E_i). $$
The analogous object in the min-plus probability is the cost measure $\mathbb{K}$ defined by three axioms:
(i) $\mathbb{K}(\emptyset) = +\infty$, (ii) $\mathbb{K}(\Omega) = 0$, and 
$$ \mathbb{K}(\bigcup_iE_i) = \bigoplus_i \mathbb{K}(E_i) = \inf_i \mathbb{K}(E_i). $$
Idempotent probability is rich and has interesting connections with dynamic programming and optimization. For instance, tropical matrix-vector multiplication can be interpreted as an update step in a Markov chain, so the Bellman equation plays the analogue of the Kolmogorov-Chapman equation. Most notably, the classical quadratic form~$(x-y)^2/2\sigma^2$ defines a stable distribution \cite{akian1994bellman}. Furthermore, it is the unique density that is invariant under the Legendre-Fenchel transform \cite{akian1994bellman}, which is the tropical analogue of the Fourier transform \cite{kolokoltsov1997idempotent}. This is in parallel to the characterization of the scaled version of the Gaussian density $x \mapsto \exp(-\pi x^2)$ being invariant under the Fourier transform. While $x \mapsto \exp(-\pi x^2)$ is not the unique function to possess this property \cite{duffin1948function}, the fact that the Fourier transform of a $\mathcal{N}(\mu,\sigma^2)$ univariate Gaussian has the form $x \mapsto \exp(i\mu x - \frac{x^2}{2})$ is frequently employed to prove independence of linear combinations of Gaussians. Under this light, one can regard the idempotent measure correspond to the density $(x-y)^2/2\sigma^2$ to be the idempotent analogue of the classical Gaussian. 

\section{Open directions} 

\subsection{Tropical curves, metric graphs and Gaussians via the Laplacian operator}

From the perspective of stochastic analysis, the Gaussian measure can be characterized as the unique invariant measure for the Ornstein-Uhlenbeck semigroup \cite[\S 1]{bogachev1998gaussian}. This semigroup is a powerful tool in proving hypercontractivity and log-Sobolev inequalities. In particular, the Gaussian density can be characterized as the function that satisfies such inequalities with the best constants \cite{bogachev1998gaussian}. One useful characterization of the Ornstein-Uhlenbeck semigroup is by its generator, whose definition has two ingredients: a Laplacian operator and a gradient operator $\nabla$. Let us elaborate. Let $\gamma$ be a centered Gaussian measure on $\R^n$. The Ornstein-Uhlenbeck semigroup $(T_t,t\geq 0)$ is defined on $L^2(\gamma)$ by the Mehler formula
$$ T_th(x) = \int_{\R^n} h(e^{-t}x + \sqrt{1-e^{-2t}}y)\, \gamma(dy), \quad t > 0 $$
and $T_0$ is the identity operator. It characterizes the Gaussian measure in the following sense \cite[\S 1]{bogachev1998gaussian}. 
\begin{lemma}
$\gamma$ is the unique invariant probability measure for $(T_t,t \geq 0)$.
\end{lemma}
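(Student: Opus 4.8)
The plan is to verify the two assertions separately: that $\gamma$ is invariant for the semigroup, and that it is the \emph{only} invariant probability measure. For invariance, I would compute directly from the Mehler formula: for a bounded measurable $h$,
$$ \int_{\R^n} T_t h(x)\,\gamma(dx) = \int_{\R^n}\int_{\R^n} h(e^{-t}x + \sqrt{1-e^{-2t}}\,y)\,\gamma(dy)\,\gamma(dx). $$
Since $\gamma = \mathcal{N}(\mathbf{0},\mathbf{I})$ (or more generally a centered Gaussian) and $X,Y$ are independent $\gamma$-distributed vectors, the linear combination $e^{-t}X + \sqrt{1-e^{-2t}}\,Y$ is again Gaussian with mean $\mathbf{0}$ and covariance $(e^{-2t} + (1-e^{-2t}))\mathbf{I} = \mathbf{I}$; here I would invoke P\'olya's theorem, or rather the elementary closure of centered Gaussians under independent linear combinations that underlies it, together with Lemma~\ref{lem:standard} to reduce to the standard case. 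Hence the double integral equals $\int h\,d\gamma$, which is exactly the statement $\gamma T_t = \gamma$ for all $t \ge 0$.

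For uniqueness, I would argue that any invariant probability measure $\nu$ must coincide with $\gamma$ by passing to characteristic functions (the classical Fourier transform, which on Gaussians has the explicit exponential-quadratic form recalled in Section~\ref{sec:k}). If $\nu$ is invariant, then for every $\xi \in \R^n$,
$$ \widehat{\nu}(\xi) = \int e^{i\langle \xi, x\rangle}\,\nu(dx) = \int e^{i\langle \xi, x\rangle}\,(\nu T_t)(dx) = \int\!\!\int e^{i\langle \xi,\, e^{-t}x + \sqrt{1-e^{-2t}}\,y\rangle}\,\nu(dx)\,\gamma(dy), $$
which factors as $\widehat{\nu}(e^{-t}\xi)\cdot \widehat{\gamma}(\sqrt{1-e^{-2t}}\,\xi) = \widehat{\nu}(e^{-t}\xi)\exp\!\big(-\tfrac12(1-e^{-2t})|\xi|^2\big)$. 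Writing $\phi(\xi) := \widehat{\nu}(\xi)\exp(\tfrac12|\xi|^2)$, this functional equation becomes $\phi(\xi) = \phi(e^{-t}\xi)$ for all $t \ge 0$ and all $\xi$; continuity of $\phi$ at $\xi = \mathbf{0}$ (characteristic functions are continuous, with $\widehat\nu(\mathbf 0)=1$) forces $\phi \equiv 1$, i.e. $\widehat\nu(\xi) = \exp(-\tfrac12|\xi|^2) = \widehat\gamma(\xi)$, so $\nu = \gamma$ by uniqueness of the Fourier transform.

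I expect the main obstacle to be not the existence/invariance part but making the uniqueness argument fully rigorous at the level of measure theory: one must justify interchanging the order of integration in the characteristic-function computation (Fubini, which is immediate since the integrand is bounded), and one must be slightly careful that "invariant" is interpreted as $\nu T_t = \nu$ for the adjoint action on measures rather than merely $T_t \mathbf{1} = \mathbf{1}$. An alternative, perhaps cleaner route for uniqueness is to use the generator: an invariant measure $\nu$ satisfies $\int L h\, d\nu = 0$ for all $h$ in a core of the generator $L$, and since $L = \Delta - \langle x, \nabla \rangle$ (the Ornstein--Uhlenbeck operator, whose two ingredients — the Laplacian and the gradient — are precisely those highlighted in the surrounding text), integrating by parts against $\nu$ and testing against sufficiently many $h$ pins down $\nu$ as $\gamma$. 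I would present the characteristic-function proof as the main argument since it is self-contained and leans only on results already available in the excerpt, and mention the generator-based proof as a remark connecting to the Laplacian/gradient discussion that motivates this section.
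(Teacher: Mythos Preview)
The paper does not supply a proof of this lemma at all: it is stated as a fact with a citation to Bogachev's monograph \cite[\S 1]{bogachev1998gaussian}, and the surrounding text only records the Mehler formula and the generator $L = \Delta - \langle x,\nabla\rangle$ before moving on. So there is no paper proof to compare against.

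Your argument is correct and is essentially the standard one. The invariance computation is exactly right (and works for any centered Gaussian $\gamma = \mathcal{N}(\mathbf{0},\Sigma)$, since the covariance of $e^{-t}X + \sqrt{1-e^{-2t}}\,Y$ is $e^{-2t}\Sigma + (1-e^{-2t})\Sigma = \Sigma$). The characteristic-function route to uniqueness is clean; the only cosmetic point is that you wrote $\exp(-\tfrac12|\xi|^2)$ for $\widehat\gamma$, which presumes $\Sigma = \mathbf{I}$, whereas the paper allows an arbitrary centered $\gamma$ --- replacing $|\xi|^2$ by $\xi^\top\Sigma\,\xi$ throughout fixes this with no change to the logic. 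Your remarks about Fubini and the meaning of invariance are appropriate caveats, and the alternative generator-based argument you sketch is indeed the one more in the spirit of the section's emphasis on $\Delta$ and $\nabla$.
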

One can arrive at this semigroup without the Mehler's formula as follows. Let $\mathcal{D} = \{h \in L^2(\gamma): \lim_{t \to 0} \frac{T_t h - h}{t} \mbox{ exists in the norm of } L^2(\gamma)\}$. (Recall that $L^2(\gamma)$ is the space of square integrable functions with respect to the measure $\gamma$).
The linear operator $L$ defined on $\mathcal{D}$ by
$$ Lh = \lim_{t \to 0} \frac{T_t h - h}{t} $$
is called the generator of the semigroup $(T_t, t \geq 0)$. The generator of the Ornstein-Uhlenbeck semigroup is given by
$$ Lh (x) = \Delta h (x) - \langle x, \nabla h \rangle = \sum_{i=1}^n \frac{\partial^2 h}{\partial x_i^2}(x) - \sum_{i=1}^n x_i \frac{\partial h}{\partial x_i}(x). $$
This generator uniquely specifies the semigroup. Importantly, the two ingredients needed to define $L$ are the Laplacian operator $\Delta$, and the gradient operator $\nabla$. Thus the semigroup can be defined on Riemannian manifolds, for instance. This opens up ways to define Gaussians on tropical curves. 

In tropical algebraic geometry, an abstract tropical curve is a metric graph \cite{mikhalkin2008tropical}. There are some minor variants: with vertex weights \cite{brannetti2011tropical,chan2012tropical}, or just the compact part \cite{baker2006metrized}. An embedded tropical curve is a balanced weighted one-dimensional complex in $\R^n$. There are several constructions of tropical curves. In particular, they arise as limits of amoebas through a process called Maslov dequantization in idempotent analysis \cite{litvinov1998linear}. Tropical algebraic geometry took off with the landmark paper of Mikhalkin \cite{mikhalkin2005enumerative}, who used tropical curves to compute Gromov-Witten invariants of the plane $\mathbb{P}^2$ \cite{maclagan2015introduction}. Since then, tropical curves, and more generally, tropical varieties, have been studied in connection to mirror and symplectic geometry \cite{gross2011tropical}. Another heavily explored aspect of tropical curves is their divisors and Riemann-Roch theory \cite{baker2007riemann,baker2016nonarchimedean,gathmann2008riemann,mikhalkin2008tropical}. 
This theory is connected to chip-firing and sandpiles, which were initially conceived as deterministic models of random walks on graphs \cite{cooper2006simulating}. 

Metric graphs are Riemannian manifolds with singularities \cite{baker2006metrized}. Brownian motions defined on metric graphs, heat semigroups on graphs, and graph Laplacians are an active research area \cite{kostrykin2012brownian,post2008spectral}. As of now, however, the author is unaware of an analogue of the Ornstein-Uhlenbeck semigroup and its invariant measure on graphs. It would also be interesting to study what Brownian motion on graphs reveals about tropical curves and their Jacobians.

\subsection{Further open directions}

The natural ambient space for doing tropical convex geometry is not $\R^m$, but $\mathbb{TP}^{n-1}$, where a vector $x \in \R^m$ is identified with all of its scalar multiples $a \odot x$. Probability theory on classical projective spaces relies on group representation \cite{benoist2014random}. Unfortunately, there is no satisfactory tropical analogue of the general linear group. Every invertible $n \times n$ matrix with entries in $\overline{\R}$ is the composition of a diagonal matrix and a permutation of the standard basis of $\overline{\R}^n$ \cite{kolokoltsov1997idempotent}. We note that several authors have studied tropicalization of special linear group over a field with valuation \cite{joswig2007affine,werner2011tropical}.
It would be interesting to see whether this can be utilized to define probability measures on $\TP^{n-1}$. 

Another approach is to `fix' the main difficulty with the idempotent algebra, namely, the lack of the additive inverse. Some authors have put back the additive inverse and developed a theory of linear algebra in this new algebra, called the supertropical algebra \cite{izhakian2008supertropical}. It would be interesting to study matrix groups and their actions under this algebra, and in particular, pursue the definition of Gaussians as invariant measures under actions of the orthogonal group.  

\subsection{Beyond Gaussians}

In a more applied direction, $\TP^{n-1}$ is a natural ambient space to study problems in economics, network flow and phylogenetics. Thus one may want an axiomatic approach to finding distributions on $\TP^{n-1}$ tailored for specific applications. For instance, in shape-constrained density estimation, log-concave multivariate totally positive of ordered two (MTP2) distributions are those whose density $f: \R^d \to \R$ is log-concave and satisfies the inequality
$$ f(x)f(y) \leq f(x \vee y)f(x \wedge y) \mbox{ for all } x,y \in \R^d. $$
A variety of distributions belong to this family. Requiring that such inequalities hold for all $x,y \in \TP^{n-1}$ leads to the stronger condition of $L^\natural$-concavity
$$ f(x)f(y) \leq f((x+\alpha\mathbf{1}) \vee y)f(x \wedge (y-\alpha \mathbf{1})) \mbox{ for all } x,y \in \R^d, \alpha \geq 0. $$
A Gaussian distribution is log-concave MTP2 if and only if the inverse of its covariance matrix is an $M$-matrix \cite{lauritzen2019maximum}. Only diagonally dominant Gaussians are $L^\natural$-concave \cite[\S 2]{murota2003discrete}. This subclass of densities has nice properties that make them algorithmically tractable in Gaussian graphical models \cite{walk_summable,LBP_inverse_M}. In particular, density estimation for $L^\natural$-concave distributions is significantly easier than for log-concave MTP2 \cite{robeva2018maximum}. It would be interesting to pursue this direction to define distributions on the space of phylogenetic trees.

\bibliographystyle{alpha}
\bibliography{survey}

\newcommand{\etalchar}[1]{$^{#1}$}
\begin{thebibliography}{KLBvdB18}

\bibitem[ABG06]{akian2006max}
Marianne Akian, Ravindra Bapat, and St{\'e}phane Gaubert.
\newblock Max-plus algebra.
\newblock {\em Handbook of linear algebra. Chapman and Hall, London}, 2006.

\bibitem[ABGJ18]{allamigeon2018log}
Xavier Allamigeon, Pascal Benchimol, St{\'e}phane Gaubert, and Michael Joswig.
\newblock Log-barrier interior point methods are not strongly polynomial.
\newblock {\em SIAM Journal on Applied Algebra and Geometry}, 2(1):140--178,
  2018.

\bibitem[AD09]{ardila2009tropical}
Federico Ardila and Mike Develin.
\newblock Tropical hyperplane arrangements and oriented matroids.
\newblock {\em Mathematische Zeitschrift}, 262(4):795--816, 2009.

\bibitem[AGG12]{akian2012tropical}
Marianne Akian, St{\'e}phane Gaubert, and Alexander Guterman.
\newblock Tropical polyhedra are equivalent to mean payoff games.
\newblock {\em International Journal of Algebra and Computation},
  22(01):1250001, 2012.

\bibitem[AGK11]{AlGK09}
Xavier Allamigeon, St{\'e}phane Gaubert, and Ricardo Katz.
\newblock The number of extreme points of tropical polyhedra.
\newblock {\em J. Comb. Theory Series A}, 118(1):162--189, 2011.

\bibitem[AGNS11]{akian2011best}
Marianne Akian, St{\'e}phane Gaubert, Viorel Ni{\c{t}}ic{\u{a}}, and Ivan
  Singer.
\newblock Best approximation in max-plus semimodules.
\newblock {\em Linear Algebra and its Applications}, 435(12):3261--3296, 2011.

\bibitem[AI11]{avendano2011multivariate}
Mart{\i}n Avendano and Ashraf Ibrahim.
\newblock Multivariate ultrametric root counting.
\newblock {\em Randomization, Relaxation, and Complexity in Polynomial Equation
  Solving, Contemporary Mathematics}, 556:1--24, 2011.

\bibitem[AQV94]{akian1994bellman}
Marianne Akian, Jean-Pierre Quadrat, and Michel Viot.
\newblock Bellman processes.
\newblock In {\em 11th International Conference on Analysis and Optimization of
  Systems Discrete Event Systems}, pages 302--311. Springer, 1994.

\bibitem[Bar86]{barron1986entropy}
Andrew~R Barron.
\newblock Entropy and the central limit theorem.
\newblock {\em The Annals of probability}, pages 336--342, 1986.

\bibitem[BCOQ92]{baccelli1992synchronization}
Fran{\c{c}}ois Baccelli, Guy Cohen, Geert~Jan Olsder, and Jean-Pierre Quadrat.
\newblock Synchronization and linearity: an algebra for discrete event systems.
\newblock 1992.

\bibitem[BF06]{baker2006metrized}
Matthew Baker and Xander Faber.
\newblock Metrized graphs, laplacian operators, and electrical networks.
\newblock {\em Contemporary Mathematics}, 415(15-34):2, 2006.

\bibitem[BF18]{butkovic2018tropical}
Peter Butkovic and Miroslav Fiedler.
\newblock Tropical tensor product and beyond.
\newblock {\em arXiv preprint arXiv:1805.03174}, 2018.

\bibitem[BH08]{briec2008halfspaces}
Walter Briec and Charles Horvath.
\newblock Halfspaces and {H}ahn-{B}anach like properties in {B}-convexity and
  max-plus convexity.
\newblock {\em Pacific J. Optim}, 4(2):293--317, 2008.

\bibitem[BK19]{baldwin2013tropical}
Elizabeth Baldwin and Paul Klemperer.
\newblock Understanding preferences: “demand types”, and the existence of
  equilibrium with indivisibilities.
\newblock {\em Econometrica}, 87(3):867--932, 2019.

\bibitem[BMV11]{brannetti2011tropical}
Silvia Brannetti, Margarida Melo, and Filippo Viviani.
\newblock On the tropical {T}orelli map.
\newblock {\em Advances in Mathematics}, 226(3):2546--2586, 2011.

\bibitem[BN07]{baker2007riemann}
Matthew Baker and Serguei Norine.
\newblock {R}iemann--{R}och and {A}bel--{J}acobi theory on a finite graph.
\newblock {\em Advances in Mathematics}, 215(2):766--788, 2007.

\bibitem[Bog98]{bogachev1998gaussian}
Vladimir~Igorevich Bogachev.
\newblock {\em {G}aussian measures}.
\newblock Number~62. American Mathematical Soc., 1998.

\bibitem[BP16]{baker2016nonarchimedean}
Matthew Baker and Sam Payne.
\newblock {\em Nonarchimedean and Tropical Geometry}.
\newblock Springer, 2016.

\bibitem[BQ14]{benoist2014random}
Yves Benoist and Jean-Fran{\c{c}}ois Quint.
\newblock Random walks on projective spaces.
\newblock {\em Compositio Mathematica}, 150(9):1579--1606, 2014.

\bibitem[Bry12]{bryc2012normal}
Wlodzimierz Bryc.
\newblock {\em The normal distribution: characterizations with applications},
  volume 100.
\newblock Springer Science \& Business Media, 2012.

\bibitem[BS07]{butkovivc2007generators}
Peter Butkovi{\v{c}} and Hans Schneider.
\newblock Generators, extremals and bases of max cones.
\newblock {\em Linear algebra and its applications}, 421(2-3):394--406, 2007.

\bibitem[But10]{butkovivc2010max}
Peter Butkovi{\v{c}}.
\newblock {\em Max-linear systems: theory and algorithms}.
\newblock Springer Science \& Business Media, 2010.

\bibitem[CGQ00]{cohen2000hahn}
Guy Cohen, St{\'e}phane Gaubert, and Jean-Pierre Quadrat.
\newblock Hahn-{B}anach separation theorem for max-plus semimodules.
\newblock {\em Optimal Control and Partial Differential Equations}, pages
  325--334, 2000.

\bibitem[CGQ04]{cohen2004duality}
Guy Cohen, St{\'e}phane Gaubert, and Jean-Pierre Quadrat.
\newblock Duality and separation theorems in idempotent semimodules.
\newblock {\em Linear Algebra and its Applications}, 379:395--422, 2004.

\bibitem[Cha12]{chan2012tropical}
Melody Chan.
\newblock {\em Tropical curves and metric graphs}.
\newblock PhD thesis, UC Berkeley, 2012.

\bibitem[CJR11]{chan20114}
Melody Chan, Anders Jensen, and Elena Rubei.
\newblock The {4$\times$ 4} minors of a {5$\times$ n} matrix are a tropical
  basis.
\newblock {\em Linear Algebra and its Applications}, 435(7):1598--1611, 2011.

\bibitem[CS06]{cooper2006simulating}
Joshua~N Cooper and Joel Spencer.
\newblock Simulating a random walk with constant error.
\newblock {\em Combinatorics, Probability and Computing}, 15(6):815--822, 2006.

\bibitem[CT16]{crowell2016tropical}
Robert~Alexander Crowell and Ngoc~Mai Tran.
\newblock Tropical geometry and mechanism design.
\newblock {\em arXiv preprint arXiv:1606.04880}, 2016.

\bibitem[Dar53]{darmois1953analyse}
George Darmois.
\newblock Analyse g{\'e}n{\'e}rale des liaisons stochastiques: etude
  particuli{\`e}re de l'analyse factorielle lin{\'e}aire.
\newblock {\em Revue de l'Institut international de statistique}, pages 2--8,
  1953.

\bibitem[DKK{\etalchar{+}}17]{dragovich2017p}
B~Dragovich, A~Yu Khrennikov, SV~Kozyrev, IV~Volovich, and EI~Zelenov.
\newblock p-adic mathematical physics: the first 30 years.
\newblock {\em p-Adic Numbers, Ultrametric Analysis and Applications},
  9(2):87--121, 2017.

\bibitem[DP12]{dubois2012fundamentals}
Didier Dubois and Henri Prade.
\newblock {\em Fundamentals of fuzzy sets}, volume~7.
\newblock Springer Science \& Business Media, 2012.

\bibitem[DS04]{develin2004tropical}
Mike Develin and Bernd Sturmfels.
\newblock Tropical convexity.
\newblock {\em Doc. Math}, 9(1-27):7--8, 2004.

\bibitem[DSS05]{develin2005rank}
Mike Develin, Francisco Santos, and Bernd Sturmfels.
\newblock On the rank of a tropical matrix.
\newblock {\em Combinatorial and computational geometry}, 52:213--242, 2005.

\bibitem[Duf48]{duffin1948function}
Richard~J Duffin.
\newblock Function classes invariant under the {F}ourier transform.
\newblock {\em Duke Mathematical Journal}, 15(3):781--785, 1948.

\bibitem[Eva01]{evans2001local}
Steven~N Evans.
\newblock Local fields, {G}aussian measures, and {B}rownian motions.
\newblock {\em Topics in probability and Lie groups: boundary theory},
  28:11--50, 2001.

\bibitem[Eva02]{evans2002elementary}
Steven~N Evans.
\newblock Elementary divisors and determinants of random matrices over a local
  field.
\newblock {\em Stochastic processes and their applications}, 102(1):89--102,
  2002.

\bibitem[Eva06]{MR2266718}
Steven~N. Evans.
\newblock The expected number of zeros of a random system of {$p$}-adic
  polynomials.
\newblock {\em Electron. Comm. Probab.}, 11:278--290, 2006.

\bibitem[EVDD04]{elsner2004max}
Ludwig Elsner and Pauline Van Den~Driessche.
\newblock Max-algebra and pairwise comparison matrices.
\newblock {\em Linear Algebra and its Applications}, 385:47--62, 2004.

\bibitem[FR15]{fink2015stiefel}
Alex Fink and Felipe Rinc{\'o}n.
\newblock Stiefel tropical linear spaces.
\newblock {\em Journal of Combinatorial Theory, Series A}, 135:291--331, 2015.

\bibitem[Gau96]{gaubert1996burnside}
St{\'e}phane Gaubert.
\newblock On the {B}urnside problem for semigroups of matrices in the (max,+)
  algebra.
\newblock In {\em Semigroup Forum}, volume~52, pages 271--292. Springer, 1996.

\bibitem[GG17]{giansiracusa2017grassmann}
Jeffrey Giansiracusa and Noah Giansiracusa.
\newblock A {G}rassmann algebra for matroids.
\newblock {\em Manuscripta Mathematica}, pages 1--27, 2017.

\bibitem[GK08]{gathmann2008riemann}
Andreas Gathmann and Michael Kerber.
\newblock A {R}iemann--{R}och theorem in tropical geometry.
\newblock {\em Mathematische Zeitschrift}, 259(1):217--230, 2008.

\bibitem[GK11]{gaubert2011minimal}
St{\'e}phane Gaubert and Ricardo~D Katz.
\newblock Minimal half-spaces and external representation of tropical
  polyhedra.
\newblock {\em Journal of Algebraic Combinatorics}, 33(3):325--348, 2011.

\bibitem[GMS13]{gursoy2013analytic}
Buket~Benek Gursoy, Oliver Mason, and Serge{\i} Sergeev.
\newblock The analytic hierarchy process, max algebra and multi-objective
  optimisation.
\newblock {\em Linear Algebra and its Applications}, 438:2911--2928, 2013.

\bibitem[Gro11]{gross2011tropical}
Mark Gross.
\newblock {\em Tropical geometry and mirror symmetry}.
\newblock Number 114. American Mathematical Soc., 2011.

\bibitem[Ham15]{hampe2015tropical}
Simon Hampe.
\newblock Tropical linear spaces and tropical convexity.
\newblock {\em The Electronic Journal of Combinatorics}, 22(4):P4--43, 2015.

\bibitem[Her50]{herschel1850quetelet}
John~FW Herschel.
\newblock Quetelet on probabilities.
\newblock {\em Edinburgh Review}, 92(185):1--30, 1850.

\bibitem[HK12]{hollings2012tropical}
Christopher Hollings and Mark Kambites.
\newblock Tropical matrix duality and {G}reen's {D} relation.
\newblock {\em Journal of the London Mathematical Society}, 86(2):520--538,
  2012.

\bibitem[Hoo17]{hook2017max}
James Hook.
\newblock Max-plus algebraic statistical leverage scores.
\newblock {\em SIAM Journal on Matrix Analysis and Applications},
  38(4):1410--1433, 2017.

\bibitem[Huh16]{huh2016tropical}
June Huh.
\newblock Tropical geometry of matroids.
\newblock {\em Current Developments in Mathematics}, 2016(1):1--46, 2016.

\bibitem[IR09]{izhakian2009tropical}
Zur Izhakian and Louis Rowen.
\newblock The tropical rank of a tropical matrix.
\newblock {\em Communications in Algebra}, 37(11):3912--3927, 2009.

\bibitem[IR10]{izhakian2008supertropical}
Zur Izhakian and Louis Rowen.
\newblock A guide to supertropical algebra.
\newblock pages 283--302, 2010.

\bibitem[JL16]{joswig2016weighted}
Michael Joswig and Georg Loho.
\newblock Weighted digraphs and tropical cones.
\newblock {\em Linear Algebra and its Applications}, 501:304--343, 2016.

\bibitem[Jos14]{joswig2014essentials}
Michael Joswig.
\newblock Essentials of tropical combinatorics.
\newblock {\em Book in preparation}, 1:226, 2014.

\bibitem[Jos17]{joswig2017cayley}
Michael Joswig.
\newblock The {C}ayley trick for tropical hypersurfaces with a view toward
  {R}icardian economics.
\newblock In {\em Homological and Computational Methods in Commutative
  Algebra}, pages 107--128. Springer, 2017.

\bibitem[JSY07]{joswig2007affine}
Michael Joswig, Bernd Sturmfels, and Josephine Yu.
\newblock Affine buildings and tropical convexity.
\newblock {\em Albanian J. Math.}, 1(4):187--211, 2007.

\bibitem[Kac39]{kac1939characterization}
Mark Kac.
\newblock On a characterization of the normal distribution.
\newblock {\em American Journal of Mathematics}, 61(3):726--728, 1939.

\bibitem[Kal06]{kallenberg2006foundations}
Olav Kallenberg.
\newblock {\em Foundations of modern probability}.
\newblock Springer Science \& Business Media, 2006.

\bibitem[KLBvdB18]{komenda2018max}
Jan Komenda, S\'{e}bastien Lahaye, Jean-Louis Boimond, and Ton J.~J. van~den
  Boom.
\newblock Max-plus algebra in the history of discrete event systems.
\newblock {\em Annual Reviews in Control}, 2018.

\bibitem[KM97]{kolokoltsov1997idempotent}
Vasily Kolokoltsov and Victor~P Maslov.
\newblock {\em Idempotent analysis and its applications}, volume 401.
\newblock Springer Science \& Business Media, 1997.

\bibitem[KPS12]{kostrykin2012brownian}
Vadim Kostrykin, J{\"u}rgen Potthoff, and Robert Schrader.
\newblock {B}rownian motions on metric graphs.
\newblock {\em Journal of Mathematical Physics}, 53(9):095206, 2012.

\bibitem[KRL73]{kagan1973characterization}
Abram~M Kagan, Calyampudi~Radhakrishna Rao, and Yurij~Vladimirovich Linnik.
\newblock Characterization problems in mathematical statistics.
\newblock 1973.

\bibitem[LMS98]{litvinov1998linear}
Grigori Litvinov, Victor Maslov, and Grigori Shpiz.
\newblock Linear functionals on idempotent spaces: An algebraic approach.
\newblock In {\em Doklady Mathematics}, volume~58, pages 389--391. Citeseer,
  1998.

\bibitem[LMY18]{lin2018tropical}
Bo~Lin, Anthea Monod, and Ruriko Yoshida.
\newblock Tropical foundations for probability \& statistics on phylogenetic
  tree space.
\newblock {\em arXiv preprint arXiv:1805.12400}, 2018.

\bibitem[LS20]{loho2018matching}
Georg Loho and Ben Smith.
\newblock Matching fields and lattice points of simplices.
\newblock {\em Advances in Mathematics}, 370:107232, 2020.

\bibitem[LUZ19]{lauritzen2019maximum}
Steffen Lauritzen, Caroline Uhler, and Piotr Zwiernik.
\newblock Maximum likelihood estimation in {G}aussian models under total
  positivity.
\newblock {\em The Annals of Statistics}, 47(4):1835--1863, 2019.

\bibitem[Max60]{maxwell1860v}
James~Clerk Maxwell.
\newblock {V}. {I}llustrations of the dynamical theory of gases.--{P}art {I}.
  {O}n the motions and collisions of perfectly elastic spheres.
\newblock {\em The London, Edinburgh, and Dublin Philosophical Magazine and
  Journal of Science}, 19(124):19--32, 1860.

\bibitem[Mik05]{mikhalkin2005enumerative}
Grigory Mikhalkin.
\newblock Enumerative tropical algebraic geometry in {R2}.
\newblock {\em Journal of the American Mathematical Society}, 18(2):313--377,
  2005.

\bibitem[MJW06]{walk_summable}
Dmitry~V. Malioutov, Jason~K. Johnson, and Alan~S. Willsky.
\newblock Walk-sums and belief propagation in {G}aussian graphical models.
\newblock {\em Journal of Machine Learning Research}, 7:2031--2064, 2006.

\bibitem[MS15]{maclagan2015introduction}
Diane Maclagan and Bernd Sturmfels.
\newblock {\em Introduction to tropical geometry}, volume 161.
\newblock American Mathematical Soc., 2015.

\bibitem[MT16]{morrison2016tropical}
Ralph Morrison and Ngoc~M Tran.
\newblock The tropical commuting variety.
\newblock {\em Linear Algebra and its Applications}, 507:300--321, 2016.

\bibitem[Mur03]{murota2003discrete}
Kazuo Murota.
\newblock {\em Discrete Convex Analysis}.
\newblock SIAM, 2003.

\bibitem[MZ08]{mikhalkin2008tropical}
Grigory Mikhalkin and Ilia Zharkov.
\newblock Tropical curves, their {J}acobians and theta functions.
\newblock {\em Curves and abelian varieties}, 465:203--230, 2008.

\bibitem[NS07]{nitica2007max}
Viorel Nitica and Ivan Singer.
\newblock Max-plus convex sets and max-plus semispaces. i.
\newblock {\em Optimization}, 56(1-2):171--205, 2007.

\bibitem[P{\'o}l23]{polya1923herleitung}
Georg P{\'o}lya.
\newblock Herleitung des {G}au{\ss}schen fehlergesetzes aus einer
  funktionalgleichung.
\newblock {\em Mathematische Zeitschrift}, 18(1):96--108, 1923.

\bibitem[Pos08]{post2008spectral}
Olaf Post.
\newblock Spectral analysis of metric graphs and related spaces.
\newblock {\em Limits of graphs in group theory and computer science}, pages
  109--140, 2008.

\bibitem[Puh01]{puhalskii2001large}
Anatolii Puhalskii.
\newblock {\em Large deviations and idempotent probability}.
\newblock Chapman and Hall/CRC, 2001.

\bibitem[PYZ20]{page2020tropical}
Robert Page, Ruriko Yoshida, and Leon Zhang.
\newblock Tropical principal component analysis on the space of phylogenetic
  trees.
\newblock {\em Bioinformatics}, 2020.

\bibitem[RSTU18]{robeva2018maximum}
Elina Robeva, Bernd Sturmfels, Ngoc Tran, and Caroline Uhler.
\newblock Maximum likelihood estimation for totally positive log-concave
  densities.
\newblock {\em Scandinavian Journal of Statistics}, 2018.

\bibitem[Ser09]{sergeev2009multiorder}
Serge{\i} Sergeev.
\newblock Multiorder, {K}leene stars and cyclic projectors in the geometry of
  max cones.
\newblock {\em Contemporary mathematics}, 14:317, 2009.

\bibitem[Shi11]{shitov2011example}
Ya~N Shitov.
\newblock Example of the matrix of size {6$\times$ 6} with different tropical
  rank and {K}apranov rank.
\newblock {\em Moscow University Mathematics Bulletin}, 66(5):227--229, 2011.

\bibitem[Shi15]{shiozawa2015international}
Yoshinori Shiozawa.
\newblock International trade theory and exotic algebras.
\newblock {\em Evolutionary and Institutional Economics Review},
  12(1):177--212, 2015.

\bibitem[Sim94]{simon1994semigroups}
Imre Simon.
\newblock On semigroups of matrices over the tropical semiring.
\newblock {\em RAIRO-Theoretical Informatics and Applications},
  28(3-4):277--294, 1994.

\bibitem[Ski53]{skitovitch1953property}
VP~Skitovitch.
\newblock On a property of the normal distribution.
\newblock {\em DAN SSSR}, 89:217--219, 1953.

\bibitem[ST12]{sturmfels2012combinatorial}
Bernd Sturmfels and Ngoc~Mai Tran.
\newblock Combinatorial types of tropical eigenvectors.
\newblock {\em Bulletin of the London Mathematical Society}, 45(1):27--36,
  2012.

\bibitem[Tra13]{tran2013pairwise}
Ngoc~Mai Tran.
\newblock Pairwise ranking: Choice of method can produce arbitrarily different
  rank order.
\newblock {\em Linear Algebra and its Applications}, 438(3):1012--1024, 2013.

\bibitem[Tra17]{tran2017enumerating}
Ngoc~Mai Tran.
\newblock Enumerating polytropes.
\newblock {\em Journal of Combinatorial Theory, Series A}, 151:1--22, 2017.

\bibitem[Tsu15]{tsukerman2015tropical}
Emmanuel Tsukerman.
\newblock Tropical spectral theory of tensors.
\newblock {\em Linear Algebra and its Applications}, 481:94--106, 2015.

\bibitem[TY19]{tran2015product}
Ngoc~Mai Tran and Josephine Yu.
\newblock Product-mix auctions and tropical geometry.
\newblock {\em Mathematics of Operations Research}, published online 1 Aug
  2019.

\bibitem[Wer11]{werner2011tropical}
Annette Werner.
\newblock A tropical view on {B}ruhat-{T}its buildings and their
  compactifications.
\newblock {\em Open Mathematics}, 9(2):390--402, 2011.

\bibitem[WF01]{LBP_inverse_M}
Yair Weiss and William Freeman.
\newblock Correctness of belief propagation in {G}aussian graphical models of
  arbitrary topology.
\newblock {\em Neural Computation}, 13, 2001.

\bibitem[WK13]{wang2013fuzzy}
Zhenyuan Wang and George~J Klir.
\newblock {\em Fuzzy measure theory}.
\newblock Springer Science \& Business Media, 2013.

\bibitem[YZZ19]{yoshida2017tropical}
Ruriko Yoshida, Leon Zhang, and Xu~Zhang.
\newblock Tropical principal component analysis and its application to
  phylogenetics.
\newblock {\em Bull. Math. Biol.}, 81(2):568--597, 2019.

\end{thebibliography}

\end{document}